\title{Convergence analysis of a variational quasi-reversibility approach for an inverse hyperbolic heat conduction problem}
\abstract{ We study a time-reversed hyperbolic heat conduction problem based upon the Maxwell--Cattaneo model of non-Fourier heat law. This heat and mass diffusion problem is a hyperbolic type equation for thermodynamics systems with thermal memory or with finite time-delayed heat flux, where the Fourier or Fick law is proven to be unsuccessful with experimental data. In this work, we show that our recent variational quasi-reversibility method for the classical time-reversed heat conduction problem, which obeys the Fourier or Fick law, can be adapted to cope with this hyperbolic scenario. We establish a generic regularization scheme in the sense that we perturb both spatial operators involved in the PDE. Driven by a Carleman weight function, we exploit the natural energy method to prove the well-posedness of this regularized scheme. Moreover, we prove the H\"older rate of convergence in the mixed  $L^2$--$H^1$ spaces.}
\keywords{Backward heat conduction problem, hyperbolic equation,  quasi-reversibility method, energy estimates, Carleman weight, H\"older convergence}
\begin{document}

\section{Introduction}
\subsection{Statement of the inverse problem}
In this work, we are interested in the extension of our new  quasi-reversibility (QR) method in \cite{Nguyen2019} for terminal boundary value problems. In this regard, we want to recover the initial distribution of an evolutionary equation, given the terminal data. This model is well known to be one of the classical problems in the field of inverse and ill-posed problems; cf. e.g. \cite{Kabanikhin2008} for some background of typical models in this research line. As to the applications of this model, having a reliable stable approximation of this backward-in-time problem is significantly helpful in many physical, biological and ecological contexts. Those are concretely involved in, e.g., the works \cite{Tuan2018,Carasso1978,Jaroudi2016,Sambatti2019}. In particular, the first contribution of this model being in mind relies on the heating/cooling transfer problem based upon the fact that sometimes, we want to measure the initial temperature of a material and our equipment only works at a given later time. Recently, this scenario has been extended to the case of a two-slab composite system with an ideal transmission condition in \cite{Tuan2018}. The second application we would like to address here is recovering blurry digital images acquired by camera sensors. This practical concern was initiated in \cite{Carasso1978} and has been scrutinized in the framework of source localization for brain tumor in \cite{Jaroudi2016}. In mathematical oncology, reconstructing the initial images of the tumor can be used for analyzing behaviors of cancer cells and then potentially for predicting the progression
of neoplasms of early-stage patients. This initial reconstruction is also part of the so-called data assimilation procedure that has been of interest so far in weather forecasting (cf. \cite{Asch2016}).

It is worth mentioning that considerations of such parabolic models indicate the use of the Fourier or Fick law. However, in some contexts of thermodynamics this typical law is proven to be unsuccessful with experimental data. In fact, any
initial disturbance in a medium is propagated instantly when taking into account the parabolic case; cf. e.g. \cite{Christov2005}. We also refer to the monograph \cite{Jou2001} and some impressive works \cite{Vadasz2005,Mendez1997}, where some electromechanical models were studied to unveil this non-standard incompatibility. In order to avoid the phenomenon of infinite
propagation, the Cattaneo--Vernotte law was derived, proposing that the parabolic case should be upgraded to a hyperbolic form. In terms of PDEs, it means one should consider
\begin{align}
	u_{tt} + u_{t} - \Delta u = 0\quad\text{in }\Omega\times\left(0,T\right), \label{11}
\end{align}
where $T>0$ is the final time and $\Omega\subset\mathbb{R}^{d}$ ($d=1,2,3$) is a regular bounded domain of interest with a sufficiently smooth boundary. In electrodynamics, equation (\ref{11}) is the same as the telegrapher's equation derived from the Maxwell equation. That is why one usually refers (\ref{11}) to as the Maxwell--Cattaneo model.

In this work, we investigate a generalized model of (\ref{11}) due to our mathematical interest. We assume to look for $u(x,t):\Omega\times (0,T)\to \mathbb{R}$ satisfying the following evolutionary equation:
\begin{equation}
u_{tt}+u_{t}-\Delta u-\Delta u_{t}=0\quad\text{in }\Omega\times\left(0,T\right).\label{eq:ori1}
\end{equation}
In the studies of the motion of viscoelastic materials, this is well-known to be the linear strongly damped wave equation, where the weak and strong damping terms ($u_t$ and $-\Delta u_t$) are altogether involved in the PDE. Cf. \cite{Bonetti2017} and references cited therein, the solution $u$ in that setting can be viewed as a displacement, whilst it is a temperature field in the context of thermodynamics we have mentioned above.  
Going back to the heat context, we note that the underlying equation (\ref{eq:ori1}) is also related to the so-called Gurtin--Pipkin model, which reads as
\begin{align}
	\theta_t = \int_{0}^{t} \kappa(t-s)\theta_{xx}(s)ds.\label{22}
\end{align}
When the kernel $\kappa$ is a constant, (\ref{22}) becomes an integrated wave equation after differentiation in time. If $\kappa(t) = e^{-t}$, one has the weakly damped wave equation $u_{tt} + u_{t} - u_{xx} = 0$. Furthermore, when $\kappa(t) = \delta(t)$, we get back to the classical heat equation. Therefore, we can conclude that our mathematical analysis for (\ref{eq:ori1}) really works for many distinctive physical applications at the same time.

To complete the time-reversed model, we endow (\ref{eq:ori1}) with the following boundary and terminal conditions:
\begin{equation}
\begin{cases}
u\left(x,t\right)=0\quad\text{ on }\partial\Omega\times\left(0,T\right),\\
u\left(x,T\right)=f_{0}\left(x\right),u_{t}\left(x,T\right)=f_{1}\left(x\right)\text{ in }\Omega.
\end{cases}\label{eq:ori2}
\end{equation}
Hence, (\ref{eq:ori1}) and (\ref{eq:ori2}) form our terminal boundary value problem. As to the ill-posedness of this problem, we refer to \cite{Tuan2017b} for proof of its natural instability using the spectral approach.

\subsection{Historical remarks and contributions of the paper}
In the context of the time-reversed parabolic problem, many regularization schemes were extensively designed in order to circumvent its natural ill-posedness. Inverse problems for parabolic equations with memory effects were investigated in \cite{Avdonin2019,Luan2019}. Since the aim of this work is extending our new QR method in \cite{Nguyen2019} to the hyperbolic heat conduction scenario, we would like to address some existing literature just on the QR topic close to the explicit technique we are developing. Meanwhile, some implicit QR methods for the backward heat conduction problem can be referred to the works \cite{Ewing1975,Long1994,Long1996,Doan2017}. The ``implicit'' here means that the scheme is designed by perturbing the kernel of the unbounded operator itself. Another QR-based approaches using minimization were studied in e.g. \cite{Klibanov2015,Klibanov2019a}.

The very first idea about quasi-reversibility of time-reversed parabolic problems was established by Latt\`es and Lions in the monograph \cite{Lattees1967} when they used a fourth-order spatial perturbation to stabilize the Laplace operator involved in the classical time-reversed parabolic equation. Motivated by this approach, several modifications and variants were constructed and analyzed through five decades, which makes this method considerable in the field of inverse and ill-posed problems. For example, we mention here the pioneering work \cite{Showalter1970}, where a third-order operator in space and time was proposed to obtain a regularization scheme in the form of a pseudoparabolic equation. Recently, Kaltenbacher et al. \cite{Kaltenbacher2019} has used a nonlocal perturbing operator in time with fractional order to regularize the ill-posed problem.

Our newly developed QR method follows the original idea of Latt\`es and Lions, i.e. we only use the spatial perturbation to stabilize the unbounded spatial operator. The key ingredient of our method lies in the fact that we use the perturbation operator to turn the inverse problem into a forward-like problem involving the stabilized operator. This notion has been studied in a spectral form in our recent work \cite{Tuan2017a}. As a follow-up, we generalize this method in \cite{Nguyen2019} by the establishment of conditional estimates for both the perturbation and stabilized operators. Driven by a Carleman weight function, we further apply the conventional energy method to show both well-posedness of the regularized system and error bounds. This way allows us to derive the scheme in the finite element setting and prove the error estimates in the finite-dimensional space. This will be our next target work in the future.

This work is the first time we extend our new method to the ill-posed problem (\ref{eq:ori1}) and (\ref{eq:ori2}). Intuitively, we construct in section \ref{sec:2-1} a generic regularized system in the sense that we perturb all the spatial terms $-\Delta u$ and $-\Delta u_t$. We then use the conditional estimates established in \cite{Nguyen2019} to obtain the H\"older rate of convergence in section \ref{sec:4}. Besides, well-posedness of the regularized system is considered in section \ref{sec:2} using a priori estimates and compactness arguments. 

\section{A variational quasi-reversibility framework} \label{sec:2-1}
To this end, $\left\langle \cdot,\cdot\right\rangle $ indicates either the scalar product in $L^2(\Omega)$ or the dual pairing
of a continuous linear functional and an element of a function space. Also, $\left\Vert \cdot\right\Vert $ is the norm in $L^2(\Omega)$. Different inner products and norms should be written as $\left\langle \cdot,\cdot\right\rangle_{X_1} $ and $\left\Vert \cdot\right\Vert_{X_2} $, respectively, where $X_1$ is a certain Hilbert space and $X_2$ is a Banach space. In the sequel, we denote $\varepsilon\in(0,1)$ by the noise level of the terminal data $f_0,f_1$ in (\ref{eq:ori2}). Any constant $C>0$ may vary from line to line. We usually indicate its dependencies if necessary.

We introduce an auxiliary function $\gamma:=\gamma(\varepsilon)\ge 1$ satisfying $\lim_{\varepsilon\to 0}\gamma(\varepsilon) = \infty$.

\begin{definition}[perturbing operator]\label{def1}
	The linear mapping $\mathbf{Q}_{\varepsilon}:L^2(\Omega)\to L^2(\Omega)$ is said to be a family of $\varepsilon$-dependent perturbing operator if there exist a function space $\mathbb{W}\subset L^2(\Omega)$ and a noise-independent constant $C_0 > 0$ such that
	\begin{align}\label{QQ}
		\left\Vert \mathbf{Q}_{\varepsilon}u\right\Vert \le C_{0}\left\Vert u\right\Vert _{\mathbb{W}}/\gamma(\varepsilon)\quad\text{for any }u\in\mathbb{W}.
	\end{align}
\end{definition}

\begin{definition}[stabilized operator]\label{def2}
	The linear mapping $\mathbf{P}_{\varepsilon}: L^2(\Omega)\to L^2(\Omega)$ is said to be a family of $\varepsilon$-dependent stabilized operator if there exists a noise-independent constant $C_1 > 0$ such that
	\begin{align}\label{PP}
		\left\Vert \mathbf{P}_{\varepsilon}u\right\Vert \le C_{1}\log\left(\gamma\left(\varepsilon\right)\right)\left\Vert u\right\Vert \quad\text{for any }u\in L^{2}\left(\Omega\right).
	\end{align}
\end{definition}

In this work, we start off with the generic approach of this modified version by stabilizing both two terms $-\Delta u$ and $-\Delta u_t$. By choosing the stabilization $\mathbf{P}_{\varepsilon}=2\Delta+\mathbf{Q}_{\varepsilon}$, our regularized equation is of the following form:
\begin{equation}
u_{tt}^{\varepsilon}+u_{t}^{\varepsilon}+\Delta u^{\varepsilon}+\Delta u_{t}^{\varepsilon}=\mathbf{P}_{\varepsilon}u^{\varepsilon}+\mathbf{P}_{\varepsilon}u_{t}^{\varepsilon}\quad\text{in }\Omega\times\left(0,T\right).\label{eq:regu1}
\end{equation}
It is worth noting that these perturbing and stabilized operators are constructed with respect to the variable $x$ only. Our main purpose in this finding is that we are able to obtain the convergence analysis of a family of regularization schemes based upon some particular conditional estimates of such perturbations and stabilizations. To qualify the convergence of such regularization schemes, the conditional estimates (\ref{QQ}) and (\ref{PP}) are particularly needed.  Some particular choices of these perturbing and stabilized operators shall be discussed in Remark \ref{rem2}.

Now we complete our regularized problem. Since in real-world applications the terminal data are usually noisy, we 
endow (\ref{eq:regu1}) with the following boundary and terminal conditions:
\begin{equation}
\begin{cases}
u^{\varepsilon}\left(x,t\right)=0\quad \text{ on }\partial\Omega\times\left(0,T\right),\\
u^{\varepsilon}\left(x,T\right)=f_{0}^{\varepsilon}\left(x\right),u_{t}^{\varepsilon}\left(x,T\right)=f_{1}^{\varepsilon}\left(x\right)\text{ in }\Omega.
\end{cases}\label{eq:regu2}
\end{equation}
In (\ref{eq:regu2}), we assume to have a noise level $\varepsilon\in (0,1)$ such that
\begin{equation}
\left\Vert u^{\varepsilon}\left(\cdot,T\right)-u\left(\cdot,T\right)\right\Vert _{H^{1}\left(\Omega\right)}+\left\Vert u_{t}^{\varepsilon}\left(\cdot,T\right)-u_{t}\left(\cdot,T\right)\right\Vert \le\varepsilon.\label{eq:measure}
\end{equation}
To validate our mathematical analysis below, we suppose that $f_0,f_0^{\varepsilon}\in H^1(\Omega)$ and $f_1,f_1^{\varepsilon}\in L^2(\Omega)$.

\begin{remark}\label{rem1}
	By the standard Fredholm theory, there exist
	\begin{itemize}
		\item a non decreasing sequence of nonegative real numbers $ \{\mu_k\}_{k=1}^\infty $ that tends to $ +\infty $ as $ k \rightarrow \infty $,
		\item a Hilbert basis $ \{\phi_k\}_{k=1}^\infty $ of $ L^2(\Omega) $ such that $ \phi_k \in H^1_0 (\Omega)$ such that
		\[
		\int_{\Omega} \nabla \phi_k \cdot \nabla \phi dx = \mu_k \int_\Omega \phi_k \phi dx \quad \text{for all } \phi \in H_0^1 (\Omega).
		\]
	\end{itemize}
	
\end{remark}

\begin{remark}\label{rem2}
	By Remark \ref{rem1}, we can take
	\begin{align}\label{choiceQ1}
	\mathbf{Q}_{\varepsilon}h=2\sum_{\mu_p \ge \frac{1}{2}\log(\gamma)}\mu_p\left\langle h,\phi_{p}\right\rangle \phi_{p}\quad \text{for }\gamma > 1.
	\end{align}
	It is immediate to see that the conditional one (\ref{QQ}) holds for $\mathbb{W}=\mathbb{G}_{1,1}(\Omega)$ and $C_0 = 2$ by using the Parseval identity. Here, cf. \cite{Tuan2017a}, we denote $\mathbb{G}_{\sigma,\alpha}(\Omega)$ by the Gevrey class of functions of order $\gamma > 0$ and index $\alpha>0$:
	\[
	\mathbb{G}_{\sigma,\alpha}\left(\Omega\right):=\left\{ u\in L^{2}\left(\Omega\right):\sum_{p=0}^{\infty}\mu_{p}^{\alpha}e^{2\sigma\mu_{p}}\left|\left\langle u,\phi_{p}\right\rangle \right|^{2}<\infty\right\} .
	\]
	By \eqref{choiceQ1} we obtain
	\begin{align}
	\mathbf{P}_{\varepsilon}h & =2\Delta h+\mathbf{Q}_{\varepsilon}h =-2\sum_{p\in\mathbb{N}}\mu_{p}\left\langle h,\phi_{p}\right\rangle \phi_{p}+2\sum_{\mu_p \ge \frac{1}{2}\log(\gamma)}\mu_p\left\langle h,\phi_{p}\right\rangle \phi_{p} \nonumber \\
	& =-2\sum_{\mu_p<\frac{1}{2}\log(\gamma)}\mu_{p}\left\langle h,\phi_{p}\right\rangle \phi_{p}. \label{choiceP1}
	\end{align}
	Thereupon, this $\mathbf{P}_{\varepsilon}$ satisfies the conditional estimate \eqref{PP} with $C_1 = 1$.
\end{remark}

\section{Well-posedness of the regularized system (\ref{eq:regu1})--(\ref{eq:regu2})} \label{sec:2}

Let $ v^\varepsilon (x,t) :=  e^{\rho (t-T)} u^\varepsilon (x,t)$ where $ \rho>1  $ is a constant chosen later.
Then~\eqref{eq:regu1}--\eqref{eq:regu2} become
\begin{equation}\label{eq:regu-v}
v_{tt}^\varepsilon
+ (1-2 \rho) v_t^\varepsilon 
+ (\rho^2-\rho) v^\varepsilon
+ (1-\rho) \Delta v^\varepsilon
+ \Delta v_t^\varepsilon
=  (1-\rho)\mathbf{P}_\varepsilon v^\varepsilon + \mathbf{P}_\varepsilon v^\varepsilon_t \; \text{ in } \Omega \times (0,T)
\end{equation}
and the boundary and terminal conditions:
\begin{equation}\label{eq:regu-bound-v}
\begin{cases}
v^\varepsilon (x,t) = 0 \quad\text{ on } \partial \Omega \times (0,T),\\
v^\varepsilon (x,T) =  f_0^\varepsilon (x) ,~v^\varepsilon_t (x,T)= \rho f_0^\varepsilon (x) + f_1^\varepsilon (x) \quad\text{ in } \Omega.
\end{cases}
\end{equation}

\begin{remark}
The most important difficult need to solve the regularized system~\eqref{eq:regu1}--\eqref{eq:regu2} lies in the term $ + \Delta u^\varepsilon $, which is bad for our energy estimations for  $ u^\varepsilon $. More precisely, the sign of this term is technically impeding the energy of the gradient term and eventually, it ruins our mathematical analysis in this section. In order to circumvent this, we consider the system~\eqref{eq:regu-v}--\eqref{eq:regu-bound-v} for $ v^\varepsilon $, which is equivalent to the regularized system~\eqref{eq:regu1}--\eqref{eq:regu2}.
Since $ \rho>1 $, then $ (1-\rho) \Delta v^\varepsilon$ becomes a ``good term'' and we shall use its effect to obtain the energy estimate for $ v^\varepsilon $ in Theorem~\ref{thm:0}. This leads us to the well-posedness of~\eqref{eq:regu-v}--\eqref{eq:regu-bound-v} as well as that of \eqref{eq:regu1}--\eqref{eq:regu2}. 	
\end{remark}

\begin{definition}\label{def:weak-v}
	
	A function $ v \in L^2 (0,T; H^1_0(\Omega))$ with $ v_t \in L^2 (0,T; H^1(\Omega)) $ and $ v_{tt} \in L^2 (0,T;H^{-1} (\Omega)) $ is a weak solution of~\eqref{eq:regu-v}--\eqref{eq:regu-bound-v} if for every test function $ \varphi \in H^1_0 (\Omega) $, it holds that
	\begin{align}\label{eq:weak-form}
	&\langle v_{tt} (t) , \varphi \rangle_{H^{-1}, H^1_0}
	+ (1-2\rho) \langle v_t (t),\varphi \rangle
	+ (\rho^2 - \rho) \langle v(t), \varphi \rangle	
	\\
&  	+ (\rho - 1 )\langle \nabla v (t), \nabla \varphi \rangle 
	- \langle \nabla v_t (t), \nabla \varphi \rangle 
	= (1-\rho) \langle \mathbf{P}_\varepsilon v (t), \varphi \rangle
	+ \langle \mathbf{P}_\varepsilon v_t(t), \varphi \rangle \nonumber
	\end{align}
	for a.e. $t\in (0,T)$, and $ v (x,T) = f_0^\varepsilon (x),~v_t  (x,T) =\rho f_0^\varepsilon (x) + f_1^\varepsilon (x) $ in $ \Omega $.
	
\end{definition}

Our proof of well-posedness relies on the conventional Galerkin method. This means that we construct solution of some finite-dimensional approximations to~\eqref{eq:weak-form}.

\begin{lemma}\label{lem:v_n}
	For any positive $ n $, there exist $ n $ absolutely continuous functions $ y_k^n :[0,T] \rightarrow \mathbb{R} $, $ k=1,\ldots,n $ and a function $ v_n \in L^2(0,T;H^1_0(\Omega)) $, where $ \partial_t v_n \in L^2(0,T;H^1(\Omega)) $ and $ \partial_{tt} v_n \in L^2(0,T;H^{-1} (\Omega)) $, of the form
	\begin{equation}\label{eq:v_n}
	v_n(x,t) = \sum_{k=1}^n y_k^n (t) \phi_k (x),
	\end{equation}
	such that for $ k=1,\ldots,n $
	\begin{equation}\label{eq:boundary-k}
	\begin{cases}
	y_k^n(T) = \int_\Omega f_0^\varepsilon (x) \phi_k (x) dx =: g_{0k} (T),\\
	\partial_t y_k^n(T) = \int_\Omega \left(\rho f_0^\varepsilon (x) + f_1^\varepsilon (x)\right) \phi_k  (x) dx =: g_{1k} (T),
	\end{cases}
	\end{equation}
	and $ v_n $ satisfies
	\begin{align}\label{eq:weak-form-k}
	&\int_\Omega \partial_{tt}  v_n(t) \phi_k dx
	+ (1-2\rho)  \int_\Omega \partial_t  v_n(t) \phi_k dx
	+ (\rho^2 - \rho) \int_\Omega v_n (t) \phi_k dx
	\nonumber \\
	&+ (\rho-1) \int_\Omega \nabla v_n(t) \cdot \nabla \phi_k dx
	-  \int_\Omega \nabla \partial_t  v_n(t) \cdot \nabla \phi_k dx \nonumber\\
	&= (1-\rho) \int_\Omega \mathbf{P}_\varepsilon v(t) \phi_k dx
	+ \int_\Omega \mathbf{P}_\varepsilon \partial_t v(t) \phi_k dx.
	\end{align}

\end{lemma}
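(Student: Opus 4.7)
The plan is to reduce the claim to a standard ODE existence theorem via the Galerkin ansatz already baked into the statement. First, insert $v_n(x,t) = \sum_{j=1}^{n} y_j^n(t)\phi_j(x)$ into the weak equation~\eqref{eq:weak-form-k} and exploit the orthonormality $\langle \phi_j, \phi_k\rangle = \delta_{jk}$ in $L^2(\Omega)$ together with $\langle \nabla \phi_j, \nabla \phi_k\rangle = \mu_k \delta_{jk}$ from Remark~\ref{rem1}. The five integrals on the left-hand side then collapse to diagonal contributions in $(y_k^n)''$, $(y_k^n)'$ and $y_k^n$, while the two perturbation integrals on the right-hand side become linear combinations over $j = 1,\ldots,n$ with coefficients $p_{kj}^\varepsilon := \langle \mathbf{P}_\varepsilon \phi_j, \phi_k\rangle$. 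These coefficients are well-defined and finite because $\mathbf{P}_\varepsilon$ maps $L^2(\Omega)$ boundedly into itself by Definition~\ref{def2}, whence Cauchy--Schwarz gives $|p_{kj}^\varepsilon| \le C_1\log(\gamma(\varepsilon))$.

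Collecting terms, write $Y = (y_1^n, \ldots, y_n^n)^\top$ and assemble the diagonal quantities $\{\mu_k\}_{k=1}^n$, $\rho$, and the matrix $P_\varepsilon^n = (p_{kj}^\varepsilon)_{k,j}$ into constant matrices $M_0, M_1 \in \mathbb{R}^{n\times n}$. The weak relations~\eqref{eq:weak-form-k} are then equivalent to the linear second-order system
\[
	Y''(t) + M_1 Y'(t) + M_0 Y(t) = 0 \quad \text{on } [0,T],
\]
subject to the terminal data $Y(T) = (g_{0k}(T))_k$ and $Y'(T) = (g_{1k}(T))_k$ read off from~\eqref{eq:boundary-k}. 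Rewriting this as a first-order system in $\mathbb{R}^{2n}$ and reversing time via $s = T - t$, I would invoke the Picard--Lindel\"of theorem to obtain a unique global solution $Y \in C^2([0,T]; \mathbb{R}^n)$. In particular, each $y_k^n$ is absolutely continuous on $[0,T]$, as required.

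Once the $y_k^n$ are in hand, the regularity assertions are immediate from the finite expansion $v_n(\cdot,t) = \sum_{k=1}^n y_k^n(t)\phi_k$. Continuity of $y_k^n$, $(y_k^n)'$, $(y_k^n)''$ on $[0,T]$, combined with $\phi_k \in H_0^1(\Omega)$, yields $v_n \in C([0,T]; H_0^1(\Omega))$, $\partial_t v_n \in C([0,T]; H_0^1(\Omega))$ and $\partial_{tt} v_n \in C([0,T]; L^2(\Omega))$; the claimed $L^2$-in-time memberships follow, using $L^2(\Omega) \hookrightarrow H^{-1}(\Omega)$ for the second derivative.

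I do not anticipate any real obstacle, since the whole construction lives in an $n$-dimensional subspace of $H_0^1(\Omega)$. The only point worth writing out carefully is the identification of the perturbation contributions as a bounded constant matrix, which is exactly what \textbf{Definition~\ref{def2}} provides. The harder analytic work — obtaining uniform-in-$n$ energy estimates that let us pass to the Galerkin limit — is separate from this lemma and belongs to Theorem~\ref{thm:0}.
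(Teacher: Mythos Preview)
Your reduction to a linear constant-coefficient ODE system for $Y=(y_1^n,\ldots,y_n^n)^\top$ is correct and matches the paper's first step: the paper also uses orthonormality and $\langle\nabla\phi_j,\nabla\phi_k\rangle=\mu_k\delta_{jk}$ to rewrite \eqref{eq:weak-form-k} as a coupled system in the $y_k^n$ and then passes to a first-order system in $\mathbb{R}^{2n}$. Where you diverge is in how existence is established. You simply cite Picard--Lindel\"of (global existence being automatic because the right-hand side is linear with a global Lipschitz constant), whereas the paper reformulates the first-order system as an integral equation $w_n=H[w_n]$ and shows by induction that some iterate $H^{(m_0)}$ is a contraction on $C([0,T];\mathbb{R}^{2n})$, then applies the Banach fixed-point theorem. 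Your route is shorter and entirely adequate here; the paper's argument is in effect a self-contained proof of the existence theorem you are quoting, and it has the minor virtue of making the dependence on $\rho$, $\mu_k$, and $C_1\log(\gamma)$ visible in the contraction constant. Either way the regularity conclusions for $v_n$ follow exactly as you describe, from the finite expansion and $y_k^n\in C^2([0,T])$.
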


\begin{proof}
By the properties of $ \{\phi_i\}_{i=1}^\infty $ in Remark~\ref{rem1}, \eqref{eq:weak-form-k} is equivalent to
\begin{align}\label{eq:01}
& \partial_{tt} y_k^n  (t)
+ (1-2\rho -\mu_k )\partial_t  y_k^n (t)
+ (\rho^2 +(\mu_k  - 1)\rho - \mu_k ) y_k^n (t)
\nonumber\\
&= (1-\rho) \sum_{i=0}^n y_i^n (t) \langle \mathbf{P}_\varepsilon \phi_i, \phi_k \rangle
+ \sum_{i=0}^n \partial_t  y_i^n (t) \langle \mathbf{P}_\varepsilon \phi_i, \phi_k \rangle \quad \text{for a.e. } t\in (0,T).
\end{align}	
Let $ z_k^n  = \frac{d}{dt} y_k^n$, it follows from~\eqref{eq:boundary-k} and~\eqref{eq:01} that
\begin{align*}
\dfrac{d}{dt}\left[\begin{array}{c}
y_{k}^{n}\\
z_{k}^{n}
\end{array}\right]
+A_k  \left[\begin{array}{c}
y_{k}^{n}\\
z_{k}^{n}
\end{array}\right]  = F_k, \quad \left[\begin{array}{c}
y_{k}^{n}(T)\\
z_{k}^{n}(T)
\end{array}\right]  =\left[\begin{array}{c}
g_{0k}(T)\\
g_{1k}(T)
\end{array}\right],
\end{align*}	
where $ F^n_k = [0,(1-\rho)\sum_{i=0}^{n}y_{k}^{n}\langle \mathbf{P}_{\varepsilon}\phi_{i},\phi_{k}\rangle+\sum_{i=0}^{n}z_{k}^{n}\langle \mathbf{P}_{\varepsilon}\phi_{i},\phi_{k}\rangle]^T $ and
\[
A_k = 
\left[\begin{array}{cc}
0 & 1\\
\rho^2 +(\mu_k - 1)\rho - \mu_k& 1 -2\rho- \mu_k
\end{array}\right] .
\]	
Consider $ w_k^n := [y_k^n, z_k^n]^T $. We thus obtain the following integral equation: 
	\begin{equation}\label{eq:02}
	w_k^n (t) = w_k^n (T) + A_k \int_t^T w_k^n (s) ds - \int_t^T F^n_k (s) ds.
	\end{equation}
Hereafter, we denote by $ w_n := [w_1^n, \ldots,w_n^n] : [0,T] \rightarrow \mathbb{R}^{2n}$.
	The integral equation~\eqref{eq:02} can be rewritten as $ w_n = H[w_n] $, where the same notation as $ w_n $ is applied to $ H $ with $ H_k^n $ being the right-hand side of~\eqref{eq:02}. To be more specific,
	\[
	H^n_k[w_n] (t) := w_k^n (T) + A_k \int_t^T w_k^n (s) ds - \int_t^T F^n_k (s) ds.
	\]
Define the norm in $ Y = C([0,T]; \mathbb{R}^{2n}) $ as follows:
\[
\left\Vert c \right\Vert_Y := \sup_{t\in [0,T]} \sum_{j=1}^n |c_j(t)| \quad \text{ with } c= [c_j] \in C([0,T]; \mathbb{R}^{2n}). 
\]
We claim that there exists $ n_0 \in \mathbb{N}^* $ such that the operator
\[ 
H^{(n_0)}:= H[H^{(n_0 -1) }] : Y \rightarrow Y
\]
 is a contraction mapping. In other words, we find $ K \in [0,1) $ such that
\[
\left\Vert H^{(n_0)} [w_n] -H^{(n_0)} [\tilde{w}_n] \right\Vert_Y 
\le 
K \left\Vert w_n - \tilde{w}_n \right\Vert_Y
\quad \text{for any } w_n, \tilde{w}_n \in Y.
\]
This can be done by induction. Indeed, let us observe that
\begin{align*}
& |H_k^n  [w_n] (t) - H_k^n  [\tilde{w}_n] (t) | \le 
\int_t^T |A_k| |w_k^n (s) - \tilde{w}_k^n (s)| ds \\
&+\int_t^T \left( C_1 C\log (\gamma)  \sum_{i=1}^{n} \left(  |1-\rho| |y_i^n (s) - \tilde{y}_i^n (s)|
+  |z_i^n (s) - \tilde{z}_i^n (s)| \right) \right)    ds  \\
& \le 
\int_t^T \left( |A_k| + C_1 C \log (\gamma) (\rho-1)  \right) |w_k^n (s) - \tilde{w}_k^n (s)|  ds \\
& \le 
\left(  |A_k| + C_1 C \log (\gamma) (\rho-1)  \right) (T-t) \left\Vert w_n - \tilde{w}_n \right\Vert_Y,
\end{align*}
aided by the conditional estimate~\eqref{PP}. Here, we indicate $ C = \max_i C( \left\Vert \phi_i \right\Vert_{H^1_0 (\Omega)}  )>0$.
Furthermore, for any $ m \in \mathbb{N}^* $
\begin{align*}
& | (H_k^n)^{(m)}  [w_n] (t) - (H_k^n)^{(m)}  [\tilde{w}_n] (t) | \\
& \le 
\int_t^T \left(  |A_k| + C_1 C \log (\gamma) (\rho-1)  \right)  | (H_k^n)^{(m -1)} [w_n] (s) - (H_k^n)^{(m-1)} [\tilde{w}_n] (s) |  ds ,
\end{align*}
and it follows by induction that
\begin{align*}
&| (H_k^n)^{(m)} [w_n] (t) - (H_k^n)^{(m)}  [\tilde{w}_n] (t) | \\
&\le \left(  |A_k| + C_1 C \log (\gamma) (\rho-1)  \right)^m \dfrac{(T-t)^m}{m!}
\left\Vert w_n - \tilde{w}_n \right\Vert_Y.
\end{align*}
Therefore, we obtain
\begin{align*}
& \left\Vert H^{(m)} [w_n] - H^{(m)} [\tilde{w}_n] \right\Vert_Y
\\
& \le \left\Vert w_n - \tilde{w}_n \right\Vert_Y \dfrac{ T^m}{m!} 
\sum_{k=1}^{n} \left(  | A_k |  + C_1 C \log (\gamma) (\rho-1)  \right)^m .
\end{align*}
Since the left-hand side tends to $ 0 $ as $ m \rightarrow \infty $,
we can find a sufficiently large $ m_0 $ such that
	\[
\dfrac{ T^{m_0}}{m_0!} 
\sum_{k=1}^{n} \left(  | A_k |  + C_1 C \log (\gamma) (\rho-1)  \right)^{m_0}  <1.
	\]
The claim is proved and by the Banach fixed-point argument, there exists a unique solution $ \bar{w}_n \in Y $ such that $ H^{(m_0)} [\bar{w}_n] = \bar{w}_n$.
Finally, since $ H^{(m_0)} \left[H[\bar{w}_n]\right] =  H \left[H^{(m_0)}[\bar{w}_n]\right] = H[\bar{w}_n]$, then the integral equation~\eqref{eq:02} admits a unique solution in $ Y $.
Hence, we complete the proof of the lemma.

\end{proof}

\begin{remark}
By Lemma~\ref{lem:v_n}, it is easy to check that there exists a constant $ C>0 $ such that
\begin{equation} \label{eq:06}
\left\Vert \partial_t v^\varepsilon_n(T) \right\Vert^2,
\left\Vert  v^\varepsilon_n(T) \right\Vert^2,
\left\Vert \nabla v^\varepsilon_n(T) \right\Vert^2 \le C \quad \text{for all } n \in \mathbb{N}.
\end{equation}
\end{remark}

\begin{theorem}\label{thm:0}
Assume~\eqref{eq:measure} holds.
	For each $ \varepsilon>0 $, the regularized system~\eqref{eq:regu-v}--\eqref{eq:regu-bound-v} admits a unique weak solution $ v^\varepsilon $ in the sense of Definition~\ref{def:weak-v}.
\end{theorem}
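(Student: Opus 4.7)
The plan is to run a standard Galerkin/compactness scheme on the approximants $v_n$ supplied by Lemma~\ref{lem:v_n}: derive an $n$-independent a priori bound, extract a weak-$*$ convergent subsequence, pass to the limit in~\eqref{eq:weak-form-k}, and finally obtain uniqueness from the same energy identity applied to the difference of two weak solutions.

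The heart of the matter is the energy estimate. Testing~\eqref{eq:weak-form-k} against $\partial_t y_k^n(t)\phi_k$ and summing over $k$, integration by parts on the two Laplacian terms yields $\tfrac{\rho-1}{2}\tfrac{d}{dt}\|\nabla v_n\|^2$ and $-\|\nabla\partial_t v_n\|^2$; after rearrangement I arrive at
\[
E_n'(t)=(2\rho-1)\|\partial_t v_n\|^2+\|\nabla\partial_t v_n\|^2+\bigl\langle(1-\rho)\mathbf{P}_\varepsilon v_n+\mathbf{P}_\varepsilon\partial_t v_n,\partial_t v_n\bigr\rangle,
\]
where the positive-definite energy is
\[
E_n(t):=\tfrac{1}{2}\|\partial_t v_n\|^2+\tfrac{\rho^2-\rho}{2}\|v_n\|^2+\tfrac{\rho-1}{2}\|\nabla v_n\|^2.
\]
Positivity uses $\rho>1$; this is the structural payoff of the change of variables $v^\varepsilon=e^{\rho(t-T)}u^\varepsilon$ flagged in the preceding remark. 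Integrating backward from $t$ to $T$, the two nonnegative quadratic terms migrate to the left-hand side and supply the bounds on $\int_t^T\|\partial_t v_n\|^2\,ds$ and $\int_t^T\|\nabla\partial_t v_n\|^2\,ds$, while the $\mathbf{P}_\varepsilon$ contribution is dominated by $C(\rho,\log\gamma(\varepsilon))\int_t^T E_n(s)\,ds$ via~\eqref{PP}, Cauchy--Schwarz, and Young's inequality. A backward Gronwall argument then yields $E_n(t)\le E_n(T)\,e^{C(T-t)}$, and~\eqref{eq:06} makes $E_n(T)$ uniformly bounded in $n$.

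Reading $\partial_{tt}v_n$ off the PDE produces an additional uniform bound in $L^2(0,T;H^{-1}(\Omega))$, so weak-$*$ compactness furnishes a subsequence converging to some $v^\varepsilon$ with the regularity demanded by Definition~\ref{def:weak-v}. Passage to the limit in~\eqref{eq:weak-form-k} is routine because each $\phi_k$ is fixed and $\mathbf{P}_\varepsilon$ is bounded on $L^2(\Omega)$ by~\eqref{PP}; density of $\mathrm{span}\{\phi_k\}$ in $H^1_0(\Omega)$ then extends the identity to arbitrary $\varphi\in H^1_0(\Omega)$, and the terminal conditions are recovered from~\eqref{eq:boundary-k} via the standard continuity embeddings for $v^\varepsilon$ and $\partial_t v^\varepsilon$. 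Uniqueness is obtained by applying the same energy identity to the difference of two weak solutions with vanishing terminal data, closing it once more with backward Gronwall. The genuinely subtle feature I expect to dwell on is the sign of $\|\nabla\partial_t v_n\|^2$: because we integrate backward in time, it lands on the \emph{left}-hand side with the favourable sign, converting what would otherwise have been a catastrophic term into the source of our extra regularity, and all constants depend on the noise level only through $\log\gamma(\varepsilon)$.
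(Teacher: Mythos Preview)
Your proposal is correct and follows essentially the same Galerkin/energy/Gr\"onwall/compactness route as the paper's own proof, including the key observation that $\rho>1$ makes $E_n$ positive definite and that backward integration places $\|\nabla\partial_t v_n\|^2$ on the favourable side. The paper additionally invokes Aubin--Lions to upgrade to strong convergence before passing to the limit and verifies the terminal data via an explicit integration-by-parts trick with a cutoff $\kappa\in C^1([0,T])$ satisfying $\kappa(T)=1$, $\kappa(0)=0$, but these are expository refinements rather than different ideas.
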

\begin{proof}
To prove the existence, we need to derive some energy estimates for approximate solution $ v_n^\varepsilon $.
	Thanks to Lemma~\ref{lem:v_n}, we have $ \partial_t v_n^\varepsilon  \in C([0,1]; H^1(\Omega))$.
	Multiplying~\eqref{eq:weak-form-k} by $ \partial_t y_k^n(t)  $, summing for $ k=1,\ldots,n $ and using the formula~\eqref{eq:v_n} for $ v_n^\varepsilon $, we get
	\begin{align*}
	&\int_\Omega \partial_{tt}  v^\varepsilon_n(t) \partial_t  v^\varepsilon_n(t)  dx
	+ (1-2\rho)  \int_\Omega | \partial_t v^\varepsilon_n(t)|^2  dx
	+ (\rho^2 - \rho) \int_\Omega v^\varepsilon_n (t)  \partial_t v^\varepsilon_n(t)  dx
	\nonumber \\
	&+ (\rho-1) \int_\Omega \nabla v^\varepsilon_n(t) \cdot \nabla  \partial_t v^\varepsilon_n(t)  dx
	-  \int_\Omega |\nabla \partial_t v^\varepsilon_n(t)|^2   dx\nonumber\\
	&= (1-\rho) \int_\Omega \mathbf{P}_\varepsilon (v^\varepsilon_n(t))  \partial_t v^\varepsilon_n(t)  dx
	+ \int_\Omega \mathbf{P}_\varepsilon (\partial_t v^\varepsilon_n(t))  \partial_t v^\varepsilon_n(t)  dx.
	\end{align*}
This implies
\begin{align}\label{eq:03}
&\dfrac{1}{2}\partial_t  \left[ \left\Vert \partial_t v^\varepsilon_n(t) \right\Vert^2
+ (\rho^2 - \rho) \left\Vert  v^\varepsilon_n(t) \right\Vert^2
+ (\rho-1) \left\Vert \nabla v^\varepsilon_n(t) \right\Vert^2
\right]
\nonumber\\
& 
- \left( 2\rho -1 \right)  \int_\Omega | \partial_t v^\varepsilon_n(t)|^2  dx
-  \int_\Omega |\nabla \partial_t v^\varepsilon_n(t)|^2 dx \nonumber\\
&= (1-\rho) \int_\Omega \mathbf{P}_\varepsilon (v^\varepsilon_n(t))  \partial_t v^\varepsilon_n(t)  dx
+ \int_\Omega \mathbf{P}_\varepsilon (\partial_t v^\varepsilon_n(t))  \partial_t v^\varepsilon_n(t) dx \nonumber\\
& \ge (1-\rho)  C_1 \log (\gamma) \left(\left\Vert v^\varepsilon_n(t) \right\Vert_{H^1(\Omega)}^2 
+
\left\Vert \partial_t v^\varepsilon_n(t) \right\Vert^2\right)
-  C_1 \log (\gamma)  \left\Vert \partial_t v^\varepsilon_n(t) \right\Vert^2,
\end{align}	
where the last inequality comes from the H\"older inequality and~\eqref{PP}.

\emph{Estimate $ v_n^\varepsilon $ in $ L^\infty (0,T;H^1(\Omega)) $ and $ \partial_t v_n^\varepsilon $ in  $ L^\infty (0,T;L^2(\Omega)) $}.
It follows from~\eqref{eq:03} that
\begin{align*}
&\partial_t  \left( \dfrac{\left\Vert \partial_t v^\varepsilon_n(t) \right\Vert^2}{\rho -1} 
+ \rho  \left\Vert  v^\varepsilon_n(t) \right\Vert^2
+ \left\Vert \nabla v^\varepsilon_n(t) \right\Vert^2
\right) 
\\
& \ge 
2C_1 \log (\gamma) \rho  \left(\dfrac{\left\Vert \partial_t v^\varepsilon_n(t) \right\Vert^2}{\rho -1} 
+ \rho  \left\Vert  v^\varepsilon_n(t) \right\Vert^2
+ \left\Vert \nabla v^\varepsilon_n(t) \right\Vert^2
\right),
\end{align*}
By Gr{\"o}nwall's inequality, we get
\begin{align}
&   \dfrac{\left\Vert \partial_t v^\varepsilon_n(t) \right\Vert^2}{\rho -1} 
+ \rho  \left\Vert  v^\varepsilon_n(t) \right\Vert^2
+ \left\Vert \nabla v^\varepsilon_n(t) \right\Vert^2
\nonumber\\
& \le 
\left(\dfrac{\left\Vert \partial_t v^\varepsilon_n(T) \right\Vert^2}{\rho -1} 
+ \rho  \left\Vert  v^\varepsilon_n(T) \right\Vert^2
+ \left\Vert \nabla v^\varepsilon_n(T) \right\Vert^2
\right)
\gamma^{2C_1 \rho (T-t)}. \label{eq:12}
\end{align}	
From~\eqref{eq:06}, one gets
\begin{equation}
\begin{cases}\label{eq:04}
 \partial_t v^\varepsilon_n  \text{ is uniformly bounded in } L^\infty (0,T; L^2 (\Omega)), \\
  v^\varepsilon_n  \text{ is uniformly bounded in } L^\infty (0,T; H^1 (\Omega)).
\end{cases}
\end{equation}
It follows from the Banach--Alaoglu theorem, and the argument that a weak limit of derivative is the derivative of the weak limit, that we can extract a subsequence of scaled approximate solutions $  v^\varepsilon_n $, which we still denote by $ \{v^\varepsilon_n \}_{n\in\mathbb{N}} $, such that for each $ \varepsilon>0 $
\begin{equation}\label{eq:11}
\begin{cases}
 \partial_t v^\varepsilon_n  \rightarrow \partial_t v^\varepsilon \text{ weakly}-* \text{ in } L^\infty (0,T; L^2 (\Omega)),\\
  v^\varepsilon_n  \rightarrow  v^\varepsilon \text{ weakly}-* \text{ in } L^\infty (0,T; H^1 (\Omega)). 
\end{cases}
\end{equation}

\emph{Estimate $ \partial_t v_n^\varepsilon $ in $ L^2(0,T;H^1_0(\Omega)) $}.
Integrating both sides of~\eqref{eq:03} from $ 0 $ to $ T $, we get
\begin{align*}
& (2\rho -1) \left\Vert \partial_t v_n^\varepsilon \right\Vert^2_{L^2(0,T;L^2(\Omega))} + \left\Vert \nabla v_n^\varepsilon\right\Vert^2_{L^2(0,T;L^2(\Omega))} \\
& \le C_1 \log(\gamma) \left( ( \rho -1 ) \left\Vert v_n^\varepsilon \right\Vert_{L^2(0,T;H^1_0(\Omega)}^2 + \rho \left\Vert \partial_t v_n^\varepsilon \right\Vert_{L^2(0,T;L^2(\Omega)}^2 \right) \\
& +\dfrac{1}{2} \left[ \left\Vert \partial_t v^\varepsilon_n(T) \right\Vert^2 
+ (\rho^2 - \rho) \left\Vert  v^\varepsilon_n(T) \right\Vert^2
+ (\rho-1) \left\Vert \nabla v^\varepsilon_n(T) \right\Vert^2
\right].
\end{align*}
From~\eqref{eq:06} and \eqref{eq:04}, it is straightforward to see that 
\begin{equation}\label{eq:07}
\left\Vert \partial_t v_n^\varepsilon \right\Vert_{L^2(0,T;H^1_0(\Omega))} \le \bar{C} \quad \text{for all }n \in \mathbb{N}.
\end{equation}
for some constant $ \bar{C} $.

\emph{Estimate $ \partial_{tt} v_n^\varepsilon$ in $ L^2(0,T;H^{-1}(\Omega)) $}.
Let $ \mathbb S_n $ be a closed subspace of $ H^1_0(\Omega) $ defined by $ \mathbb S_n = \{\varphi\in H^1_0(\Omega): \int_\Omega \varphi \varphi_k dx = 0 \text{ for all } k\le n \} $. Let $ \mathbb S^\perp_n $ be a closed subspace of $ H^1_0 (\Omega) $ such that $ H^1_0 (\Omega) = \mathbb S_n \oplus \mathbb S^\perp_n $.
In other words, for all $ \varphi \in H^1_0(\Omega)$, we can write $ \varphi $ of the form
$\varphi = \varphi_n + \varphi^\perp_n $ where $ \varphi \in \mathbb S_n  $ and $ \varphi^\perp_n \in  \mathbb S^\perp_n $.
Therefore, for  a.e. $ t\in [0,T] $, from~\eqref{eq:weak-form-k}, one gets
\begin{align*}
&	\langle \partial_{tt} v_n^\varepsilon(t), \varphi\rangle \\
&	= 
(2\rho -1 ) \langle \partial_t v_n^\varepsilon(t) , \varphi_n \rangle
+ (\rho - \rho^2)  \langle  v_n^\varepsilon(t) , \varphi_n \rangle
+ (1-\rho) \langle  \nabla v_n^\varepsilon(t), \nabla \varphi_n \rangle
\\
&   +\langle \nabla \partial_t v_n^\varepsilon, \nabla \varphi_n \rangle 
+ (1-\rho) \langle \mathbf{P}_\varepsilon (v_n^\varepsilon (t)), \varphi_n \rangle
+ \langle \mathbf{P}_\varepsilon (\partial_t v_n^\varepsilon (t)), \varphi_n \rangle \\
& \le (2\rho -1 ) \left\Vert \partial_t v_n^\varepsilon(t) \right\Vert \left\Vert \varphi_n \right\Vert
+ (\rho^2 - \rho) \left\Vert  v_n^\varepsilon(t) \right\Vert\left\Vert \varphi_n \right\Vert\\
&  + (\rho-1)  \left\Vert \nabla v_n^\varepsilon(t) \right\Vert \left\Vert \nabla \varphi_n \right\Vert
+ \left\Vert \partial_t \nabla v_n^\varepsilon(t) \right\Vert \left\Vert \nabla \varphi_n \right\Vert\\
& + (\rho-1) \left\Vert \mathbf{P}_\varepsilon (v_n^\varepsilon (t)) \right\Vert \left\Vert \varphi_n \right\Vert
+ \left\Vert \mathbf{P}_\varepsilon (\partial_t v_n^\varepsilon (t)) \right\Vert  \left\Vert \varphi_n \right\Vert.
\end{align*}
Since $  \left\Vert \varphi_n \right\Vert_{H^1_0(\Omega)}  \le  \left\Vert \varphi_n \right\Vert_{H^1_0(\Omega)}  + \left\Vert \varphi_n^{\perp} \right\Vert_{H^1_0(\Omega)}  = \left\Vert \varphi \right\Vert_{H^1_0(\Omega)}  $ for all $ n\in \mathbb{N} $,  we get
\begin{align*}
&\left\Vert\ \partial_{tt} v_n^\varepsilon (t) \right\Vert_{H^-1(\Omega)}  = \sup_{\varphi \in H^1_0(\Omega) \backslash \{0\} } \dfrac{\langle \partial_{tt} v_n^\varepsilon(t), \varphi\rangle}{\left\Vert \varphi \right\Vert_{H^1_0(\Omega)} }\\
& \le  (2\rho -1 ) \left\Vert \partial_t v_n^\varepsilon(t) \right\Vert
+ (\rho^2 - \rho) \left\Vert  v_n^\varepsilon(t) \right\Vert
+ (\rho-1)  \left\Vert \nabla v_n^\varepsilon(t) \right\Vert \\
&  + \left\Vert \partial_t \nabla v_n^\varepsilon(t) \right\Vert
+  C_1 \log(\gamma)  \left((1-\rho) \left\Vert  v_n^\varepsilon (t) \right\Vert_{H^1_0 (\Omega)} 
+ \left\Vert \partial_t v_n^\varepsilon (t) \right\Vert_{H^1_0 (\Omega)}  \right),
\end{align*}
where the last term in the right-hand side comes from the properties of $ \mathbf{P}_\varepsilon^1 $ and $ \mathbf{P}_\varepsilon^2 $.
From~\eqref{eq:04} and~\eqref{eq:07}, there exists a constant $ \tilde{C}>0 $ such that
\begin{equation}
\left\Vert \partial_{tt} v_n^\varepsilon \right\Vert_{L^2(0,T;H^{-1}(\Omega))} \le \tilde{C} \quad \text{for all }n \in \mathbb{N}.
\end{equation}
Henceforth, from the Banach--Alaoglu theorem, there exists a subsequence of $ \{v_n^\varepsilon\} $ (still denoted by $ \{v_n^\varepsilon\} $) such that
\begin{equation}\label{eq:13}
\partial_{tt} v_n^\varepsilon \rightarrow \partial_{tt} v^\varepsilon \text{ weakly in } L^2(0,T;H^{-1}(\Omega)).
\end{equation}
Combining the above weak-star and weak limits, the function $ v^\varepsilon $ satisfies
\begin{equation*}
\begin{cases}
v^\varepsilon \in L^\infty (0,T; H^1_0(\Omega)),\\
\partial_t v^\varepsilon \in L^\infty (0,T;L^2(\Omega)) \cap L^2(0,T;H^1_0 (\Omega)),\\
\partial_{tt} v^\varepsilon \in L^2(0,T;H^{-1} (\Omega)).
\end{cases}
\end{equation*}
Furthermore, since $ H_0^1 (\Omega)$ is compactly embedded in $ L^2(\Omega) $ and $ L^2(\Omega) $ is continuously embedded in $ H^{-1} (\Omega) $ (by Rellich--Kondrachov), from Aubin--Lions lemma, we get
\begin{equation}\label{eq:08}
\begin{cases}
v_n^\varepsilon \rightarrow v^\varepsilon \text{ strongly in } C([0,T]; H_0^1(\Omega)), \\
\partial_t v_n^\varepsilon \rightarrow  \partial_t v^\varepsilon \text{ strongly in } C([0,T]; L^2(\Omega)).
\end{cases}
\end{equation}
Fix an integer $ N $ and choose a function $ \bar{v} \in C^1(0,T; H^1_0(\Omega ))$ having the form
\begin{equation}\label{eq:09}
\bar{v} (t) = \sum_{k=1}^{N} d_k (t) \phi_k,
\end{equation}
where $ d_1, \ldots,d_N $ are given real valued $ C^1 $ functions defined in $ [0,T] $.
For all $ x \ge N $, multiplying~\eqref{eq:weak-form-k}, summing for $ k=1,\ldots,N $  and integrating over $ (0,T) $ lead to
\begin{align*}
&\int_\Omega \partial_{tt}  v_n^\varepsilon(t) \bar{v} dx
+ (1-2\rho)  \int_\Omega \partial_t v_n^\varepsilon(t) \bar{v} dx
+ (\rho^2 - \rho) \int_\Omega v_n^\varepsilon (t) \bar{v} dx
\nonumber \\
&+ (\rho-1) \int_\Omega \nabla v_n^\varepsilon(t) \cdot \nabla \bar{v} dx
-  \int_\Omega \nabla \partial_t v_n^\varepsilon(t) \cdot \nabla \bar{v} dx \nonumber\\
&= (1-\rho) \int_\Omega \mathbf{P}_\varepsilon v_n^\varepsilon(t) \bar{v} dx
+ \int_\Omega \mathbf{P}_\varepsilon \partial_t v_n^\varepsilon(t) \bar{v} dx.
\end{align*}
Letting $ n\rightarrow \infty $, we obtain from~\eqref{eq:08} that
\begin{align}\label{eq:10}
&\int_\Omega \partial_{tt}  v^\varepsilon(t) \bar{v} dx
+ (1-2\rho)  \int_\Omega \partial_t v^\varepsilon(t) \bar{v} dx
+ (\rho^2 - \rho) \int_\Omega v^\varepsilon (t) \bar{v} dx
\nonumber \\
&+ (\rho-1) \int_\Omega \nabla v^\varepsilon(t) \cdot \nabla \bar{v} dx
-  \int_\Omega \nabla \partial_t v^\varepsilon(t) \cdot \nabla \bar{v} dx \nonumber\\
&= (1-\rho) \int_\Omega \mathbf{P}_\varepsilon v^\varepsilon(t) \bar{v} dx
+ \int_\Omega \mathbf{P}_\varepsilon \partial_tv^\varepsilon(t) \bar{v} dx.
\end{align}
Since the functions of the form~\eqref{eq:09} are dense in $ L^2(0,T;H^1_0(\Omega)) $, the equality~\eqref{eq:10} holds for all test function $ \bar{v} \in L^2(0,T;H^1_0 (\Omega)) $.
We deduce that the function $ v^\varepsilon $ obtained from approximate solutions $ v_n^\varepsilon $ satisfies the weak formulation in Definition~\ref{def:weak-v}.

It now remains to verify the initial data for $ v^\varepsilon $.
Take $ \kappa \in C^1([0,T]) $ satisfying $ \kappa(T) = 1 $ and $ \kappa(0) = 0 $.
It follows from~\eqref{eq:11} that
\[
\int_0^T \langle \partial_t v_n^\varepsilon (t),  \phi \rangle \kappa (t) dt \rightarrow \int_0^T \langle \partial_t v^\varepsilon (t),  \phi \rangle \kappa (t) dt \quad \text{for all } \phi \in H^1_0(\Omega).
\]	
Then by integration by parts, one gets
\begin{align*}
 \int_0^T \langle  v_n^\varepsilon (t),  \phi \rangle \partial_t \kappa (t) dt 
& - \langle  v_n^\varepsilon (T),  \phi \rangle  \kappa (T) \\
& \rightarrow 
 \int_0^T \langle  v^\varepsilon (t),  \phi \rangle \partial_t \kappa (t) dt 
- \langle  v^\varepsilon (T),  \phi \rangle  \kappa (T) 
\end{align*}	 
and thereupon, we get $ \langle  v_n^\varepsilon (T),  \phi \rangle \rightarrow \langle  v^\varepsilon (T),  \phi \rangle $	for all $ \phi \in H^1_0(\Omega) $ by virtue of~\eqref{eq:11}.
From Lemma~\ref{lem:v_n}, we also have that $ v_n^\varepsilon (T) \rightarrow f_0^\varepsilon $ in $ L^2(\Omega) $ as $ n \rightarrow \infty $.
Thus  $ \langle  v^\varepsilon (T),  \phi \rangle = \langle  f^\varepsilon_0,  \phi \rangle $ for all $ \phi \in H^1_0 (\Omega) $, which implies that $ v^\varepsilon(T) = f^\varepsilon_0 $ a.e. in $ \Omega $.
Similarly, 
it follows from~\eqref{eq:13} that
\[
\int_0^T \langle \partial_{tt} v_n^\varepsilon (t),  \phi \rangle \kappa (t) dt \rightarrow \int_0^T \langle \partial_{tt} v^\varepsilon (t),  \phi \rangle \kappa (t) dt \quad \text{for all } \phi \in H^1_0(\Omega).
\]	
Then by integration by parts, one gets
\begin{align*}
&- \int_0^T \langle  \partial_t v_n^\varepsilon (t),  \phi \rangle \partial_t \kappa (t) dt 
+ \langle \partial_t v_n^\varepsilon (T),  \phi \rangle  \kappa (T) \\
& \rightarrow - \int_0^T \langle \partial_t v^\varepsilon (t),  \phi \rangle \partial_t \kappa (t) dt 
+ \langle \partial_t v^\varepsilon (T),  \phi \rangle  \kappa (T) \quad\text{ as } n \rightarrow \infty .
\end{align*}
Using the similar arguments as in the proof for $ v^\varepsilon (T) $, we  obtain that $ \partial_t v^\varepsilon(T) = \rho f^\varepsilon_0 + f^\varepsilon_1$ a.e. in $ \Omega $	. Hence, we complete the proof of the existence.
%
%
%

Finally, we are going to prove the uniqueness of \eqref{eq:regu-v}--\eqref{eq:regu-bound-v}.
We sketch out some important steps because this proof is standard.
Indeed, let $ v^\varepsilon $ and $ \bar{v}^\varepsilon $ be two weak solutions of the system~\eqref{eq:regu-v}--\eqref{eq:regu-bound-v}.	
Since the system is linear, it is straightforward to see that the function $ k^\varepsilon = v^\varepsilon - \bar{v}^\varepsilon  $	satisfies~\eqref{eq:regu-v} with zero terminal conditions $ k^\varepsilon (T) = \partial_t k^\varepsilon (T) = 0 $.
Taking $ \varphi = \partial_t k^\varepsilon $ as a test function, we proceed as in the way to get the estimate~\eqref{eq:12}.
Hence, $ k^\varepsilon(t) =0 $ a.e. in $ (0,T) $ because of the fact that
\[
 \dfrac{\left\Vert \partial_t k^\varepsilon(t) \right\Vert^2}{\rho -1} 
 + \rho  \left\Vert  k^\varepsilon (t) \right\Vert^2
 + \left\Vert \nabla k^\varepsilon (t) \right\Vert^2 \le 0 \quad \text{a.e. in } (0,T). 
\]
This completes the proof of the theorem. 
\end{proof}

\section{Convergence analysis}\label{sec:4}
In this part, our focus is on the convergence analysis of the variational QR framework adapted to solve the time-reversed hyperbolic heat conduction problem. The error estimate obtained below can be viewed as a ``worst-case'' scenario of convergence of this QR scheme in case the stabilized operator $\mathbf{P}_{\varepsilon}$ is bounded logarithmically.

It is worth noting that our analysis in section \ref{sec:2} does not care about the dependence of $C$ (and any type of constants in there) on the noise level $\varepsilon$, since basically we fix $\varepsilon$. However, to this end any constant $C>0$ used below should be $\varepsilon$-independent because we are going to show the error estimates with respect to only $\varepsilon$.

\begin{theorem}\label{thm:1}
		Assume (\ref{eq:measure}) holds. Let $\varepsilon\in\left(0,1\right)$ be a sufficiently small number
		such that $\gamma:=\gamma\left(\varepsilon\right)\ge e^{2/C_{1}}$. Suppose the
		following conditions hold 
		\begin{align}\label{eq:assum}
		\begin{cases}
		3C_{1}T<2,\\
		\lim_{\varepsilon\to0}\gamma^{2}\left(\varepsilon\right)\varepsilon\le K.
		\end{cases}
		\end{align}
		Next, assume the original system \eqref{eq:ori1}\textendash \eqref{eq:ori2} admits a unique solution $u$ such that $u\in C([0,T];\mathbb{W})$ and $u_t \in L^2(0,T;\mathbb{W})$, where $\mathbb{W}$ is obtained in Definition \ref{def1}. Let $M>0$ be such that
		\[
		\left\Vert u\right\Vert _{C\left(\left[0,T\right];\mathbb{W}\right)}^{2}+\left\Vert u_{t}\right\Vert _{L^2\left(0,T;\mathbb{W}\right)}^{2}\le M.
		\]
		Let $u^{\varepsilon}$ be a unique weak solution of the regularized system \eqref{eq:regu1}\textendash \eqref{eq:regu2} analyzed in Theorem~\ref{thm:0}. Then for $0\le t \le T$ the following error estimates hold:
		\begin{align*}
			& \left\Vert u^{\varepsilon}\left(t\right)-u\left(t\right)\right\Vert ^{2}\le C\left(\varepsilon+\left(\log(\gamma)\right)^{-1}\gamma^{3C_{1}\left(T-t\right)-2}\right),\\
			& \left\Vert \nabla u^{\varepsilon}\left(t\right)-\nabla u\left(t\right)\right\Vert ^{2}\le C\left(\log\left(\gamma\right)\varepsilon+\gamma^{3C_{1}\left(T-t\right)-2}\right),\\
			& \left\Vert u_{t}^{\varepsilon}\left(t\right)-u_{t}\left(t\right)\right\Vert ^{2}+\int_{t}^{T}\left\Vert \nabla u_{t}^{\varepsilon}\left(s\right)-\nabla u_{t}\left(s\right)\right\Vert ^{2}ds
			\\ &
			\le C\left(\left(\log(\gamma)\right)^2\varepsilon
			+\log\left(\gamma\right)\gamma^{3C_{1}\left(T-t\right)-2}\right).
		\end{align*}
		where $C=C\left(K,M,C_{0},C_{1}\right)>0$ is independent of $\varepsilon$.
\end{theorem}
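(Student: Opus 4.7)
My approach is to transport the Carleman-weighted energy machinery of Section~\ref{sec:2} to the error function $w^\varepsilon := u^\varepsilon - u$. The first step would be to derive the error PDE by rewriting \eqref{eq:ori1} in a form parallel to \eqref{eq:regu1}. Since $\mathbf{P}_\varepsilon = 2\Delta + \mathbf{Q}_\varepsilon$, the exact solution $u$ satisfies
\[
u_{tt} + u_t + \Delta u + \Delta u_t = \mathbf{P}_\varepsilon u + \mathbf{P}_\varepsilon u_t - \mathbf{Q}_\varepsilon u - \mathbf{Q}_\varepsilon u_t,
\]
and subtracting from \eqref{eq:regu1} yields
\[
w^\varepsilon_{tt} + w^\varepsilon_t + \Delta w^\varepsilon + \Delta w^\varepsilon_t = \mathbf{P}_\varepsilon w^\varepsilon + \mathbf{P}_\varepsilon w^\varepsilon_t + \mathbf{Q}_\varepsilon u + \mathbf{Q}_\varepsilon u_t
\]
in $\Omega \times (0,T)$, with zero Dirichlet conditions, terminal data $w^\varepsilon(\cdot,T) = f_0^\varepsilon - f_0$ and $w^\varepsilon_t(\cdot,T) = f_1^\varepsilon - f_1$ controlled by \eqref{eq:measure}, and a small extra forcing $\mathbf{Q}_\varepsilon u + \mathbf{Q}_\varepsilon u_t$ whose $L^2$-norm is of order $1/\gamma$ by Definition~\ref{def1} and the regularity assumption on $u$.

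Next I would perform the same Carleman substitution $W^\varepsilon := e^{\rho(t-T)} w^\varepsilon$ as in Section~\ref{sec:2} with $\rho>1$ to be fixed later. The resulting equation is exactly \eqref{eq:regu-v} with $f_0^\varepsilon$ replaced by $f_0^\varepsilon - f_0$ and with one additional right-hand side, $e^{\rho(t-T)}(\mathbf{Q}_\varepsilon u + \mathbf{Q}_\varepsilon u_t)$. Testing against $W^\varepsilon_t$ would reproduce the identity \eqref{eq:03} plus the single new term $\langle e^{\rho(t-T)}(\mathbf{Q}_\varepsilon u + \mathbf{Q}_\varepsilon u_t), W^\varepsilon_t\rangle$. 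I would estimate this by Cauchy--Schwarz, invoke~\eqref{QQ}, and use a Young inequality whose splitting parameter is of order $\log(\gamma)$: this lets me absorb the $W^\varepsilon_t$-piece into the $\mathbf{P}_\varepsilon$-contribution already present, while the remaining piece is of order $(\gamma^2 \log(\gamma))^{-1}(\|u(t)\|_{\mathbb{W}}^2 + \|u_t(t)\|_{\mathbb{W}}^2)$, whose time integral over $(t,T)$ is bounded by $CM/(\gamma^2 \log(\gamma))$ thanks to the hypothesis on $u$.

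Applying Gr{\"o}nwall's inequality backward in time from $T$ to $t$, as was done for \eqref{eq:12}, and choosing $\rho = 3/2$ so that $2C_1 \rho = 3C_1$, I would arrive at
\[
\frac{\|W^\varepsilon_t(t)\|^2}{\rho-1} + \rho \|W^\varepsilon(t)\|^2 + \|\nabla W^\varepsilon(t)\|^2 \le C\Bigl( \varepsilon^2 + \frac{1}{\gamma^2 \log(\gamma)} \Bigr)\gamma^{3C_1(T-t)},
\]
where the $\varepsilon^2$-term collects the terminal data $\|W^\varepsilon(T)\|_{H^1}^2 + \|W^\varepsilon_t(T)\|^2 \le C\varepsilon^2$ coming from \eqref{eq:measure}. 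Using $\gamma^2 \varepsilon \le K$ together with $3C_1 T < 2$ to convert $\varepsilon^2 \gamma^{3C_1(T-t)} \le K\varepsilon \gamma^{3C_1(T-t)-2} \le K\varepsilon$, and then unraveling $w^\varepsilon = e^{\rho(T-t)} W^\varepsilon$ (whose Carleman factor is uniformly bounded by $e^{3T/2}$), I would read off the three stated bounds by extracting $\|W^\varepsilon(t)\|^2$, $\|\nabla W^\varepsilon(t)\|^2$, and $\|W^\varepsilon_t(t)\|^2$ from the energy functional. The extra piece $\int_t^T \|\nabla W^\varepsilon_t\|^2 ds$ featured in the third estimate falls out ``for free'' from the left-hand side of the energy identity when one integrates the differential inequality instead of merely applying Gr{\"o}nwall.

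The main obstacle I anticipate is the precise bookkeeping of $\log(\gamma)$-factors so that the three estimates differ by exactly one power of $\log(\gamma)$ as stated. This forces a careful calibration of the Young-inequality splitting parameters for both the $\mathbf{P}_\varepsilon$-cross term $(1-\rho)\langle \mathbf{P}_\varepsilon W^\varepsilon, W^\varepsilon_t\rangle$ and the new $\mathbf{Q}_\varepsilon$-source: the absorbing $\|W^\varepsilon_t\|^2$-contribution must merge into the Gr{\"o}nwall coefficient $2C_1\rho\log(\gamma)$, while the non-absorbing pieces must scale as $\log(\gamma)\|W^\varepsilon\|^2$ (for the $\mathbf{P}_\varepsilon$-term) and $(\gamma^2\log(\gamma))^{-1}\|\cdot\|_{\mathbb{W}}^2$ (for the $\mathbf{Q}_\varepsilon$-term), respectively. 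Apart from this calibration, the remainder of the argument is a routine differential-inequality and Gr{\"o}nwall manipulation followed by removal of the exponential weight.
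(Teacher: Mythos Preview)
Your overall strategy---derive the error PDE, apply the Carleman substitution $W^\varepsilon=e^{\rho(t-T)}(u^\varepsilon-u)$, test against $W^\varepsilon_t$, and close via Gr\"onwall---is the paper's approach. The gap is your choice $\rho=3/2$. The paper takes $\rho_\varepsilon=C_1\log(\gamma)$, \emph{large and $\varepsilon$-dependent}, and this is essential for two reasons that a fixed $\rho$ cannot reproduce.

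First, your ``absorption'' claim fails. With $\rho=3/2$ the good term $(2\rho-1)\|W^\varepsilon_t\|^2$ is $O(1)$, while the $\mathbf P_\varepsilon$-contribution to the $\|W^\varepsilon_t\|^2$ coefficient is $-\rho C_1\log(\gamma)$. If you split the $\mathbf Q_\varepsilon$-source with a Young parameter of order $\log(\gamma)$ to get the $(\gamma^2\log\gamma)^{-1}$ residual you want, you add another $-\tfrac12\log(\gamma)\|W^\varepsilon_t\|^2$; these negative pieces \emph{add}, they do not absorb. The Gr\"onwall rate then becomes $\bigl(\tfrac{3C_1+1}{2}\bigr)\log(\gamma)$ and the exponential factor is $\gamma^{(3C_1+1)(T-t)}$, not $\gamma^{3C_1(T-t)}$. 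The paper avoids this by taking $\rho_\varepsilon=C_1\log(\gamma)$, so that $(2\rho_\varepsilon-1)$ is itself of order $\log(\gamma)$, and by using the sharper splitting
\[
\langle(1-\rho_\varepsilon)\mathbf P_\varepsilon w^\varepsilon,w^\varepsilon_t\rangle\ge-\tfrac12(\rho_\varepsilon-1)C_1^2\log^2(\gamma)\|w^\varepsilon\|^2-\tfrac12(\rho_\varepsilon-1)\|w^\varepsilon_t\|^2,
\]
after which the net $\|w^\varepsilon_t\|^2$ coefficient on the Gr\"onwall side is $2-\rho_\varepsilon\le0$ and drops out entirely. Only the $\|w^\varepsilon\|^2$ term survives, with Gr\"onwall rate $C_1\log(\gamma)$; the missing $\gamma^{2C_1(T-t)}$ is then recovered in the back-substitution $u^\varepsilon-u=w^\varepsilon e^{\rho_\varepsilon(T-t)}=w^\varepsilon\gamma^{C_1(T-t)}$, which is \emph{not} bounded here.

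Second, the graded $\log(\gamma)$ powers distinguishing the three estimates come precisely from the graded energy coefficients the paper obtains: $(\rho_\varepsilon^2-\rho_\varepsilon)\|w^\varepsilon\|^2\sim\log^2(\gamma)\,\|w^\varepsilon\|^2$, $(\rho_\varepsilon-1)\|\nabla w^\varepsilon\|^2\sim\log(\gamma)\,\|\nabla w^\varepsilon\|^2$, and $1\cdot\|w^\varepsilon_t\|^2$. Dividing these out produces the factors $(\log\gamma)^{-1}$, $1$, $\log\gamma$ respectively. With $\rho=3/2$ all your energy coefficients are $O(1)$, so the best you can extract is the \emph{same} bound $C(\varepsilon+\gamma^{3C_1(T-t)-2})$ for all three quantities. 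That suffices for the second and third estimates (which are weaker) but not for the first, where the claimed $(\log\gamma)^{-1}$ in front of $\gamma^{3C_1(T-t)-2}$ is genuinely out of reach of a fixed-$\rho$ argument. The ``careful calibration'' you anticipate in your last paragraph is exactly this issue, and it is resolved not by tuning Young parameters but by letting $\rho$ scale with $\log(\gamma)$.
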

\begin{proof}
	Let $w^{\varepsilon}\left(x,t\right)=\left[u^{\varepsilon}\left(x,t\right)-u\left(x,t\right)\right]e^{\rho_{\varepsilon}\left(t-T\right)}$
	for some $\rho_{\varepsilon}>0$, viewing as a weighted difference
	function in our proof of convergence. The notion behind this use of
	the Carleman weight function is to ``maximize'' the measured terminal
	data that we are having and thus, we can take full advantage of the
	noise level $\varepsilon$. The weight function here is classical in the framework of parabolic equations backward in time; cf. e.g. \cite[Section 9]{Yamamoto2009}. In principle, the downscaling (with respect
	to the noise level) used here is helpful in getting rid of the large
	stability magnitude by a suitable choice of the auxiliary parameter
	$\rho_{\varepsilon}$, which is also relatively large. Now, we compute
	the equation for $w^{\varepsilon}$, calling as the difference equation
	between the regularized problem \eqref{eq:regu1}\textendash \eqref{eq:regu2}
	and the original system \eqref{eq:ori1}\textendash \eqref{eq:ori2}.
	In fact, we have
	\begin{align}
		 w_{t}^{\varepsilon}&=\left[u_{t}^{\varepsilon}-u_{t}\right]e^{\rho_{\varepsilon}\left(t-T\right)}+\rho_{\varepsilon}\left[u^{\varepsilon}-u\right]e^{\rho_{\varepsilon}\left(t-T\right)} \nonumber\\
		& 
		=\left[u_{t}^{\varepsilon}-u_{t}\right]e^{\rho_{\varepsilon}\left(t-T\right)}+\rho_{\varepsilon}w^{\varepsilon},\label{eq:5a}\\
		 \Delta w^{\varepsilon}&=\left[\Delta u^{\varepsilon}-\Delta u\right]e^{\rho_{\varepsilon}\left(t-T\right)},
	\end{align}
	which lead to
	\begin{align}
		 w_{tt}^{\varepsilon}-\rho_{\varepsilon}w_{t}^{\varepsilon}&=\left[u_{tt}^{\varepsilon}-u_{tt}\right]e^{\rho_{\varepsilon}\left(t-T\right)}+\rho_{\varepsilon}\left[u_{t}^{\varepsilon}-u_{t}\right]e^{\rho_{\varepsilon}\left(t-T\right)} \nonumber\\
		 &=\left[u_{tt}^{\varepsilon}-u_{tt}\right]e^{\rho_{\varepsilon}\left(t-T\right)}+\rho_{\varepsilon}\left(w_{t}^{\varepsilon}-\rho_{\varepsilon}w^{\varepsilon}\right),\\
		 \Delta w_{t}^{\varepsilon}-\rho_{\varepsilon}\Delta w^{\varepsilon}&=\left[\Delta u_{t}^{\varepsilon}-\Delta u_{t}\right]e^{\rho_{\varepsilon}\left(t-T\right)}.\label{eq:8a}
	\end{align}
	Hereby, we notice that when multiplying both sides of the systems
	\eqref{eq:regu1}\textendash \eqref{eq:regu2} and \eqref{eq:ori1}\textendash \eqref{eq:ori2}
	by the weight $e^{\rho_{\varepsilon}\left(t-T\right)}$, it yields
	\begin{align}
		& \left[u_{tt}^{\varepsilon}-u_{tt}\right]e^{\rho_{\varepsilon}\left(t-T\right)}+\left[u_{t}^{\varepsilon}-u_{t}\right]e^{\rho_{\varepsilon}\left(t-T\right)}
		+\Delta\left(u^{\varepsilon}-u\right)e^{\rho_{\varepsilon}\left(t-T\right)} \nonumber \\ &+\Delta\left(u_{t}^{\varepsilon}-u_{t}\right)e^{\rho_{\varepsilon}\left(t-T\right)} =\mathbf{P}_{\varepsilon}\left(u^{\varepsilon}-u\right)e^{\rho_{\varepsilon}\left(t-T\right)}+\mathbf{Q}_{\varepsilon}ue^{\rho_{\varepsilon}\left(t-T\right)} \nonumber
		\\ &+\mathbf{P}_{\varepsilon}\left(u_{t}^{\varepsilon}-u_{t}\right)e^{\rho_{\varepsilon}\left(t-T\right)}+\mathbf{Q}_{\varepsilon}u_{t}e^{\rho_{\varepsilon}\left(t-T\right)}.
		\label{eq:9a}
	\end{align}
	Henceforth, we plug the identities \eqref{eq:5a}\textendash \eqref{eq:8a}
	into the equation \eqref{eq:9a} to get
	\begin{align}
		& w_{tt}^{\varepsilon}+\left(\rho_{\varepsilon}^{2}-\rho_{\varepsilon}\right)w^{\varepsilon}-\left(\rho_{\varepsilon}-1\right)\Delta w^{\varepsilon}+\Delta w_{t}^{\varepsilon}\nonumber \\
		& =\mathbf{P}_{\varepsilon}w^{\varepsilon}+\mathbf{Q}_{\varepsilon}ue^{\rho_{\varepsilon}\left(t-T\right)}+\left(2\rho_{\varepsilon}-1\right)w_{t}^{\varepsilon}+\mathbf{P}_{\varepsilon}\left(w_{t}^{\varepsilon}-\rho_{\varepsilon}w^{\varepsilon}\right)+\mathbf{Q}_{\varepsilon}u_{t}e^{\rho_{\varepsilon}\left(t-T\right)}.\label{eq:PDEw}
	\end{align}
	which is the PDE for the difference function $w^{\varepsilon}$.
	
	Now, we multiply both sides of \eqref{eq:PDEw} by $w_{t}^{\varepsilon}$
	and integrate the resulting equation over $\Omega$. After some manipulations,
	we arrive at
	\begin{align}
		& \frac{1}{2}\frac{d}{dt}\left\Vert w_{t}^{\varepsilon}\right\Vert ^{2}+\frac{1}{2}\left(\rho_{\varepsilon}^{2}-\rho_{\varepsilon}\right)\frac{d}{dt}\left\Vert w^{\varepsilon}\right\Vert ^{2}+\frac{1}{2}\left(\rho_{\varepsilon}-1\right)\frac{d}{dt}\left\Vert \nabla w^{\varepsilon}\right\Vert ^{2}-\left\Vert \nabla w_{t}^{\varepsilon}\right\Vert ^{2}\nonumber \\
		& =\left(2\rho_{\varepsilon}-1\right)\left\Vert w_{t}^{\varepsilon}\right\Vert ^{2}+\left\langle \left(\mathbf{P}_{\varepsilon}-\rho_{\varepsilon}\mathbf{P}_{\varepsilon}\right)w^{\varepsilon},w_{t}^{\varepsilon}\right\rangle +\left\langle \mathbf{P}_{\varepsilon}w_{t}^{\varepsilon},w_{t}^{\varepsilon}\right\rangle \nonumber \\ & +e^{\rho_{\varepsilon}\left(t-T\right)}\left\langle \mathbf{Q}_{\varepsilon}u,w_{t}^{\varepsilon}\right\rangle +e^{\rho_{\varepsilon}\left(t-T\right)}\left\langle \mathbf{Q}_{\varepsilon}u_{t},w_{t}^{\varepsilon}\right\rangle .\label{eq:11a}
	\end{align}
	Based upon the conditional estimates (\ref{QQ})--(\ref{PP}) we estimate the right-hand side
	of \eqref{eq:11a} as follows:
	\begin{align}
		& \left\langle \left(\mathbf{P}_{\varepsilon}-\rho_{\varepsilon}\mathbf{P}_{\varepsilon}\right)w^{\varepsilon},w_{t}^{\varepsilon}\right\rangle \ge-\frac{1}{2}\left(\rho_{\varepsilon}-1\right)C_{1}^{2}\left(\log(\gamma)\right)^2\left\Vert w^{\varepsilon}\right\Vert ^{2}-\frac{1}{2}\left(\rho_{\varepsilon}-1\right)\left\Vert w_{t}^{\varepsilon}\right\Vert ^{2},\nonumber \\
		& \left\langle \mathbf{P}_{\varepsilon}w_{t}^{\varepsilon},w_{t}^{\varepsilon}\right\rangle \ge-C_{1}\log\left(\gamma\right)\left\Vert w_{t}^{\varepsilon}\right\Vert ^{2}, \label{a3} \\
		& e^{\rho_{\varepsilon}\left(t-T\right)}\left\langle \mathbf{Q}_{\varepsilon}u,w_{t}^{\varepsilon}\right\rangle 
		\ge
		-\frac{1}{2}\left(\dfrac{1}{4}\left\Vert w_{t}^{\varepsilon}\right\Vert ^{2}+4e^{2\rho_{\varepsilon}\left(t-T\right)}C_{0}^2\gamma^{-2}\left\Vert u\right\Vert ^{2}_{\mathbb{W}_{1}}\right), \label{a1}\\
		& e^{\rho_{\varepsilon}\left(t-T\right)}\left\langle \mathbf{Q}_{\varepsilon}u_{t},w_{t}^{\varepsilon}\right\rangle 
		\ge
		-\frac{1}{2}\left(\dfrac{1}{4}\left\Vert w_{t}^{\varepsilon}\right\Vert ^{2}+4e^{2\rho_{\varepsilon}\left(t-T\right)}C_{0}^2\gamma^{-2}\left\Vert u_{t}\right\Vert ^{2}_{\mathbb{W}_{2}}\right). \label{a2}
	\end{align}
	Therefore, by integrating (\ref{eq:11a}) from $t$ to $T$ we estimate that
	\begin{align*}
		& \left\Vert w^\varepsilon_{t}\left(t\right)\right\Vert ^{2}+(\rho_{\varepsilon}^{2}-\rho_{\varepsilon})\left\Vert w^{\varepsilon}\left(t\right)\right\Vert ^{2}+(\rho_{\varepsilon}-1)\left\Vert \nabla w^{\varepsilon}\left(t\right)\right\Vert ^{2}+2\int_{t}^{T}\left\Vert \nabla w_{t}^{\varepsilon}\left(s\right)\right\Vert ^{2}ds\\
		& \le\left\Vert w^\varepsilon_{t}\left(T\right)\right\Vert ^{2}+(\rho_{\varepsilon}^{2}-\rho_{\varepsilon})\left\Vert w^{\varepsilon}\left(T\right)\right\Vert ^{2}+\left(\rho_{\varepsilon}-1\right)\left\Vert \nabla w^{\varepsilon}\left(T\right)\right\Vert ^{2}\\
		& +4C_{0}^2\gamma^{-2}\rho_{\varepsilon}^{-1}\left(1-e^{2\rho_{\varepsilon}\left(t-T\right)}\right)\left\Vert u\right\Vert _{C\left(\left[0,T\right];\mathbb{W}_1\right)}^{2} 
		+4C_{0}^2\gamma^{-2}\left\Vert u_{t}\right\Vert _{L^2\left(0,T;\mathbb{W}_2\right)}^{2}\\
		& +C_{1}^{2}\rho_{\varepsilon}^{-1}\left(\log(\gamma)\right)^{2}\int_{t}^{T}\rho_{\varepsilon}\left(\rho_{\varepsilon}-1\right)\left\Vert w^{\varepsilon}\left(s\right)\right\Vert ^{2}ds\\
		& +2\left[\frac{1}{2}\left(\rho_{\varepsilon}-1\right)+C_{1}\log\left(\gamma\right)+ \dfrac{1}{2}-2\rho_{\varepsilon}+1\right]\int_{t}^{T}\left\Vert w_{t}^{\varepsilon}\left(s\right)\right\Vert ^{2}ds.
	\end{align*}
	By choosing $\rho_{\varepsilon}=C_{1}\log\left(\gamma\right)\ge2$
	(since $\gamma\ge e^{2/C_{1}}$), the last term in the right-hand side becomes 
	$(2-\rho_\varepsilon)\int_t^T  \left\Vert w_{t}^{\varepsilon}\left(s\right)\right\Vert ^{2}ds \le 0 $,
	we apply the Gr\"onwall
	inequality to obtain
	\begin{align}
		& \left\Vert w_{t}^{\varepsilon}\left(t\right)\right\Vert ^{2}+\left(\rho_{\varepsilon}^{2}-\rho_{\varepsilon}\right)\left\Vert w^{\varepsilon}\left(t\right)\right\Vert ^{2}+\left(\rho_{\varepsilon}-1\right)\left\Vert \nabla w^{\varepsilon}\left(t\right)\right\Vert ^{2}+2\int_{t}^{T}\left\Vert \nabla w_{t}^{\varepsilon}\left(s\right)\right\Vert ^{2}ds\nonumber \\
		& \le\left[2\left(\rho_{\varepsilon}^{2}+1\right)\varepsilon^{2}+\varepsilon^{2}\left(\rho_{\varepsilon}^{2}-1\right) + 4C_0^2 \gamma^{-2}M\right]\gamma^{C_{1}\left(T-t\right)}
		,\label{eq:13a}
	\end{align}
	where we have used the measurement assumption \eqref{eq:measure}
	and the fact that
	\begin{equation}\label{haha}
	\left\Vert w^{\varepsilon}_{t}\left(T\right)\right\Vert ^{2}
	=\left\Vert \left[u_{t}^{\varepsilon}\left(T\right)-u_{t}\left(T\right)\right] +\rho_{\varepsilon}\left[u^{\varepsilon}\left(T\right)-u\left(T\right)\right]\right\Vert ^{2} 
\le2\left(\rho_{\varepsilon}^{2}+1\right) \varepsilon^{2}.
	\end{equation}
	Thus, using the back-substitution
	\begin{equation}\label{eq:14}
	w^{\varepsilon}\left(x,t\right)=\left[u^{\varepsilon}\left(x,t\right)-u\left(x,t\right)\right]e^{\rho_{\varepsilon}\left(t-T\right)}=\left[u^{\varepsilon}\left(x,t\right)-u\left(x,t\right)\right]\gamma^{C_{1}\left(t-T\right)},
	\end{equation}
	we conclude the convergence in $L^{2}\left(\Omega\right)$ type as
	follows:
	\begin{align*}
		& \left\Vert u^{\varepsilon}\left( t\right)-u\left(t\right)\right\Vert ^{2} \\ & \le\left(\frac{2\left(\rho_{\varepsilon}^{2}+1\right)}{\rho_{\varepsilon}^{2}-\rho_{\varepsilon}}+\frac{\rho_{\varepsilon}^{2}-1}{\rho_{\varepsilon}^{2}-\rho_{\varepsilon}}\right)\varepsilon^{2}\gamma^{3C_{1}\left(T-t\right)} +\frac{4}{\rho_{\varepsilon}^{2}-\rho_{\varepsilon}}C_{0}^2M\gamma^{-2}\gamma^{3C_{1}\left(T-t\right)}\\
		& \le
		\frac{\rho_{\varepsilon}^{2}-1}{\rho_{\varepsilon}^{2}-\rho_{\varepsilon}}\left(3\gamma^{3C_{1}\left(T-t\right)}+\gamma^{3C_{1}\left(T-t\right)}\right)\varepsilon^{2}  +4C_{0}^2M\rho_{\varepsilon}^{-1}\gamma^{3C_{1}\left(T-t\right)-2}\\
		& \le2\left(4\gamma^{3C_{1}\left(T-t\right)}\varepsilon^{2}+C_{1}^{-1}2C_{0}^2M\left(\log(\gamma)\right)^{-1}\gamma^{3C_{1}\left(T-t\right)-2}\right).
	\end{align*}
From~\eqref{eq:assum}, we get $ \gamma^{3C_{1}\left(T-t\right)}\varepsilon^{2} \le K^{\frac{3C_1T}{2}} \varepsilon $ and it follows from the previous inequality that
\begin{equation}\label{eq:15}
\left\Vert u^{\varepsilon}\left( t\right)-u\left(t\right)\right\Vert ^{2} 
\le
C \left(\varepsilon + \left(\log(\gamma)\right)^{-1}\gamma^{3C_{1}\left(T-t\right)-2}\right),
\end{equation}
for some constant $ C >0$.
	In the same manner, we derive from \eqref{eq:13a} the convergence
	for the gradient terms:
	\begin{align*}
		 \left\Vert \nabla u^{\varepsilon}\left(t\right)-\nabla u\left(t\right)\right\Vert ^{2}
		& \le2\left(4C_{1}\log\left(\gamma\right)\gamma^{3C_{1}\left(T-t\right)}\varepsilon^{2}
		+2C_{0}M\gamma^{3C_{1}\left(T-t\right)-2}\right)\\
		& \le C\left(\log\left(\gamma\right)\varepsilon+\gamma^{3C_{1}\left(T-t\right)-2}\right).
	\end{align*} 
Now using the back-substitution~\eqref{eq:14}, we get
\[
\nabla w_t^\varepsilon (t) = \left[ \nabla u_t^\varepsilon (t) - \nabla u_t(t) \right] \gamma^{C_1 (t-T)} + \rho_\varepsilon \left[ \nabla u^\varepsilon (t) - \nabla u(t) \right] \gamma^{C_1 (t-T)}. 
\]
It yields
\begin{align*}
& 2\int_t^T \left\Vert\nabla w_t^\varepsilon (s) \right\Vert^2 ds
+  2\int_t^T \left\Vert  \nabla u^\varepsilon (s) - \nabla u(s) \right\Vert^2 \rho_\varepsilon^2\gamma^{2C_1 (s-T)} ds\\ 
&\ge  \int_t^T \left\Vert  \nabla u_t^\varepsilon (s) - \nabla u_t(s) \right\Vert^2 \gamma^{2C_1 (s-T)} ds \\ & \ge \gamma^{2C_1 (t-T)} \int_t^T \left\Vert  \nabla u_t^\varepsilon (s) - \nabla u_t(s) \right\Vert^2  ds.
\end{align*}
Thus it follows from~\eqref{eq:13a} that
\begin{align*}
& \int_t^T \left\Vert  \nabla u_t^\varepsilon (s) - \nabla u_t(s) \right\Vert^2 ds\\ 
\le &
\left (4\rho_{\varepsilon}^{2}\varepsilon^{2}+ 4C_0^2 \gamma^{-2}M\right)\gamma^{3C_{1}\left(T-t\right)}
+ 2 \rho_\varepsilon^2  \gamma^{2C_1 (t-T)} \int_t^T \left\Vert  \nabla u^\varepsilon (s) - \nabla u(s) \right\Vert^2 ds\\
 \le &
\left[ 4 C_1^2  (\log(\gamma))^2 \varepsilon^2  \gamma^{3C_{1}\left(T-t\right)} +  4C_0^2 M \gamma^{3C_{1}\left(T-t\right) -2 }\right] \\
& + C T \rho_\varepsilon^2  \gamma^{2C_1 (t-T)} \left(\varepsilon + \left(\log(\gamma)\right)^{-1}\gamma^{3C_{1} T-2}\right),
\end{align*}
where we have used the estimate~\eqref{eq:15} for the last inequality.
This implies
\[
\int_t^T \left\Vert  \nabla u_t^\varepsilon (s) - \nabla u_t(s) \right\Vert^2 ds
\le C\left(\left(\log(\gamma)\right)^{2}\varepsilon+\log\left(\gamma\right)\gamma^{3C_{1}\left(T-t\right)-2}\right).
\]
Finally, using the back-substitution~\eqref{eq:14}, one has	
\[
w_t^\varepsilon (t) = \left[  u_t^\varepsilon (t) -  u_t(t) \right] \gamma^{C_1 (t-T)} + \rho_\varepsilon \left[  u^\varepsilon (t) -  u(t) \right] \gamma^{C_1 (t-T)}. 
\]	
This implies
\begin{align*}
\left\Vert  u_t^\varepsilon (t) -  u_t(t) \right\Vert^2 \gamma^{2C_1 (t-T)}
\le
2\left\Vert w_t^\varepsilon (s) \right\Vert^2
+ 2 \rho_\varepsilon^2  \left\Vert  u^\varepsilon (t) -  u(t) \right\Vert^2.
\end{align*}
Applying the estimate of~$ \left\Vert w_t^\varepsilon\right\Vert^2 $ in \eqref{eq:13a} and $ \left\Vert u^{\varepsilon}\left( t\right)-u\left(t\right)\right\Vert ^{2}  $ in~\eqref{eq:15}, we obtain
\[
\left\Vert   u_t^\varepsilon (t) -  u_t(t) \right\Vert^2 
\le C\left((\log\left(\gamma\right))^{2}\varepsilon+\log\left(\gamma\right)\gamma^{3C_{1}\left(T-t\right)-2}\right).
\] 
	Hence, we complete the proof of the theorem.	

\end{proof}

As a by-product of Theorem \ref{thm:1}, an appropriate choice of $\gamma$ is taken to state the following convergence result with the H\"older rates. It is then noticeable that our error estimates in Theorem \ref{thm:1} and in Corollary \ref{corol:1} below are uniform in time. In this regard, they are still true for $t=0$ under the restriction \eqref{eq:assum}. This H\"older convergence result is quite different from that of the backward heat equations, which is usually logarithmic at $t=0$ and is of H\"older type at $t>0$; cf. e.g. \cite{Hao2009, Hao2018, Long1994}. In the framework of the backward hyperbolic problems, our convergence result can also be compared with those obtained in \cite{Tuan2017b}.

\begin{corollary}\label{corol:1}
	Under the assumptions of Theorem \ref{thm:1}, if we choose $\gamma\left(\varepsilon\right) = \varepsilon^{-1/2}$, then for any $\varepsilon \le e^{-4/C_1}$ the following error estimates hold:
	\begin{align*}
		& \left\Vert u^{\varepsilon}\left(t\right)-u\left(t\right)\right\Vert ^{2}\le C\left(\varepsilon+(\log(\varepsilon^{-1/2}))^{-1}\varepsilon^{1-3C_{1}\left(T-t\right)/2}\right),\\
		& \left\Vert \nabla u^{\varepsilon}\left(t\right)-\nabla u\left(t\right)\right\Vert ^{2}\le C\left(\log(\varepsilon^{-1/2})\varepsilon+\varepsilon^{1-3C_{1}\left(T-t\right)/2}\right),\\
		& \left\Vert u_{t}^{\varepsilon}\left(t\right)-u_{t}\left(t\right)\right\Vert ^{2}+\int_{t}^{T}\left\Vert \nabla u_{t}^{\varepsilon}\left(s\right)-\nabla u_{t}\left(s\right)\right\Vert ^{2}ds
		\\ &
		\le C\left((\log(\varepsilon^{-1/2}))^{2}\varepsilon+\log(\varepsilon^{-1/2})\varepsilon^{1-3C_{1}\left(T-t\right)/2}\right).
	\end{align*}
	where $C=C\left(M,C_{0},C_{1}\right)>0$ is independent of $\varepsilon$.
\end{corollary}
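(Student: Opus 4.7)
The plan is to observe that Corollary~\ref{corol:1} is an essentially algebraic corollary of Theorem~\ref{thm:1}: one specializes the generic auxiliary parameter $\gamma(\varepsilon)$ to $\varepsilon^{-1/2}$ and reads off the resulting exponents. So the proof should amount to checking that this particular $\gamma$ is admissible in the hypotheses of Theorem~\ref{thm:1}, and then performing two simple substitutions in each of the three error bounds produced by that theorem.

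First I would verify the admissibility. The growth constraint $\gamma(\varepsilon)\ge e^{2/C_1}$ reads $\varepsilon^{-1/2}\ge e^{2/C_1}$, which is equivalent to the stated restriction $\varepsilon\le e^{-4/C_1}$. The limit condition $\lim_{\varepsilon\to 0}\gamma^2(\varepsilon)\varepsilon\le K$ from \eqref{eq:assum} is trivially satisfied since $\gamma^2(\varepsilon)\varepsilon=\varepsilon^{-1}\cdot\varepsilon=1$, so one may take $K=1$. The remaining hypothesis $3C_1T<2$ is kept unchanged, and the regularity assumption $u\in C([0,T];\mathbb{W})$ with $u_t\in L^2(0,T;\mathbb{W})$ is inherited verbatim. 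Hence Theorem~\ref{thm:1} applies with our choice of $\gamma$.

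Next I would translate the generic bounds. Under $\gamma(\varepsilon)=\varepsilon^{-1/2}$ we have the identities
\[
\gamma^{3C_1(T-t)-2}=\varepsilon^{1-3C_1(T-t)/2},\qquad \log(\gamma)=\log(\varepsilon^{-1/2}),
\]
and substituting these into the three estimates furnished by Theorem~\ref{thm:1} immediately produces the three estimates claimed in Corollary~\ref{corol:1}. The constant $C=C(K,M,C_0,C_1)$ from Theorem~\ref{thm:1} becomes $C=C(M,C_0,C_1)$ once $K=1$ is fixed.

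Since every step is a direct substitution, there is no genuine obstacle. The only point worth flagging is the structural one motivating the particular rate: the choice $\gamma=\varepsilon^{-1/2}$ saturates the constraint $\gamma^2\varepsilon\le K$, which simultaneously makes the perturbation contribution $\gamma^{-2}$ decay like $\varepsilon$ and keeps the Carleman-type exponent $1-3C_1(T-t)/2$ strictly positive for all $t\in[0,T]$ under the hypothesis $3C_1T<2$. This is exactly what yields the Hölder rate uniformly down to $t=0$, in contrast to the logarithmic behavior at $t=0$ typical of backward parabolic problems, as highlighted in the discussion preceding the corollary.
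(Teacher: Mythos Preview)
Your proposal is correct and matches the paper's approach: the paper presents Corollary~\ref{corol:1} as an immediate by-product of Theorem~\ref{thm:1} with no separate proof, and your argument simply makes explicit the admissibility check and the two substitutions $\gamma^{3C_1(T-t)-2}=\varepsilon^{1-3C_1(T-t)/2}$ and $\log(\gamma)=\log(\varepsilon^{-1/2})$ that the paper leaves to the reader.
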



\begin{remark}
	\begin{itemize}
		\item If we apply the perturbing and stabilized operators \eqref{choiceQ1} and \eqref{choiceP1} in Remark \ref{rem2} to Theorem \ref{thm:1}, regularity of the true solution of the original system \eqref{eq:ori1}\textendash \eqref{eq:ori2} is restricted in the Gevrey space. More precisely, one has $u\in C([0,T];\mathbb{W}_{1,1})$ and $u_{t}\in C([0,T];\mathbb{W}_{1,1})$ in Theorem \ref{thm:1}. The same result is applied for Corollary \ref{corol:1}.
		
		\item Obviously, the perturbation for $-\Delta u$ and $-\Delta u_t$ of \eqref{eq:ori1} can be different from each other. This will also lead to different regularity assumptions on the exact solution that we have assumed in Theorem \ref{thm:1}.
		
		\item We remark that if the measurement assumption (\ref{eq:measure}) is only given by
		\[
		\left\Vert u^{\varepsilon}\left(\cdot,T\right)-u\left(\cdot,T\right)\right\Vert \le \varepsilon,
		\]
		we obtain the logarithmic rate of convergence in the following sense:
		\[
		\left\Vert u^{\varepsilon}\left(t\right)-u\left(t\right)\right\Vert ^{2}\le C/(\log(\gamma))^2.
		\]
	\end{itemize}
\end{remark}

\bibliography{sample}                                             %

%
%
%
%
%
%

\end{document}